\def\ds{\displaystyle}
\def\e{\varepsilon}
\def\d{\delta}
\def\ch2{\mathbb{C} \mathbb{H}^2}
\def\h2{\mathbb{H}^2}
\def\oshr/2{\cosh \left( \frac{r}{2} \right)}
\def\inhr/2{\sinh \left( \frac{r}{2} \right)}
\def\osh2r/2{\cosh^2 \left( \frac{r}{2} \right)}
\def\inh2r/2{\sinh^2 \left( \frac{r}{2} \right)}
\def\-1/4{- \frac{1}{4}}
\def\H{\mathbb{H}}
\def\C{\mathbb{C}}
\def\P{\mathbb{P}}
\newtheorem{theorem}{Theorem}[section]
\newtheorem{cor}[theorem]{Corollary}
\newtheorem*{question}{Question}
\theoremstyle{definition}
\theoremstyle{remark}
\newtheorem{remark}[theorem]{Remark}
\numberwithin{equation}{section}
\DeclareRobustCommand{\looooongrightarrow}{%
\DOTSB\relbar\joinrel\relbar\joinrel\relbar\joinrel\relbar\joinrel\relbar\joinrel\relbar\joinrel\relbar\joinrel\relbar\joinrel\relbar\joinrel\relbar\joinrel\relbar\joinrel\rightarrow}
\begin{document}

%\title{K\"{a}hler manifolds with negatively curved Einstein metrics}
%\title{Negatively curved Einstein metrics on complex hyperbolic branched covers.}
\title[Ratios of Chern numbers for $\C \H$ branched covers]{On ratios of Chern numbers for complex hyperbolic branched covers}

%    Information for first author
\author{Barry Minemyer}
%    Address of record for the research reported here
\address{Department of Mathematics, Computer Science, and Digital Forensics, Commonwealth University - Bloomsburg, Bloomsburg, Pennsylvania 17815}
%    Current address
%\curraddr{Department of Mathematical Sciences,
%Binghamton University, Binghamton, New York 13902}
\email{bminemyer@commonwealthu.edu}
%    \thanks will become a 1st page footnote.
%\thanks{I was supported by the Mathematical Sciences Department at Binghamton University}

%    Information for second author
%\author{Barry Minemyer}
%    Address of record for the research reported here
%\address{Department of Mathematics, Computer Science, and Digital Forensics, Commonwealth University - Bloomsburg, Bloomsburg, Pennsylvania 17815}
%    Current address
%\curraddr{Department of Mathematical Sciences,
%Binghamton University, Binghamton, New York 13902}
%\email{bminemyer@commonwealthu.edu}
%    \thanks will become a 1st page footnote.
%\thanks{I was supported by the Mathematical Sciences Department at Binghamton University}

%    General info
\subjclass[2020]{Primary 55R25, 57R20; Secondary 53C20, 53C24}

\date{\today.}

%\dedicatory{This paper is dedicated to our authors.}

%\keywords{complex hyperbolic plane, hyperbolic plane, negative curvature, hyperplane complement, warped product metric, Farrell-Jones conjecture}

\begin{abstract}
In this paper we prove that, at least in even complex dimensions, the ratio of Chern numbers for a closed complex hyperbolic branched cover manifold are not all equal to the corresponding ratio of Chern numbers for a closed complex hyperbolic manifold.
This leads to an answer for a question posed by Deraux and Seshadri, and proves that an almost $1/4$-pinched metric constructed by the author in a previous article is not K\"{a}hler.  
\end{abstract}

\maketitle

%\section*{This is an unnumbered first-level section head}
%This is an example of an unnumbered first-level heading.

%\specialsection*{Introduction}
%This is an example of a special section head.

\section{Introduction}\label{Section:Introduction}

We say that a manifold $M$ is {\it complex hyperbolic} if its universal cover is isometric to complex hyperbolic space $\C \H^n$.
In the literature such manifolds are frequently referred to as {\it complex ball quotients}.
Note that, in this case, $M = \Gamma \setminus \C \H^n$ for some torsion free lattice $\Gamma < \text{PU}(n,1)$.  
In this article we will say that a pair $(M, N)$ is {\it modeled on} $(\C \H^n , \C \H^{n-1})$ if $M$ is complex hyperbolic with complex dimension $n$, $N \subset M$ is embedded and totally geodesic, $N$ has real codimension $2$, and the lift of $N$ to the universal cover of $M$ is isometric to disjoint copies of $\C \H^{n-1}$.  

Recently in \cite{ST} Stover and Toledo proved the following.  
Choose an integer $d \geq 2$, and suppose that $\Gamma$ as above is a cocompact congruence arithmetic lattice of simple type.  
Then there exists a finite cover $(M', N')$ of $(M, N)$ such that the fundamental class $[N'] \in H_{2n-2}(M', \mathbb{Z})$ is $d$-divisible.
Consequently, the $d$-fold cyclic branched cover $X$ of $M'$ about $N'$ exists and is a smooth manifold.  
These complex hyperbolic branched cover manifolds admit a negatively curved K\"{a}hler metric by a result of Zheng \cite{Zheng}, and their existence has led to the solution of several open problems in complex geometry.  
For example, Guenancia and Hamenst\"{a}dt have shown that a large class of these manifolds admit a negatively curved K\"{a}hler-Einstein metric \cite{GH} (see also the related \cite{LafontMinemyer}).
Additionally, the author has shown that these manifolds admit an almost negatively $1/4$-pinched Riemannian metric \cite{MinemyerKahler} and that this same metric has nonpositive curvature operator \cite{MinemyerCurvatureOperators}.

The purpose of this short note is to study the ratio of Chern numbers for these complex hyperbolic branched cover manifolds $X$.  
Our main result in this direction is the following.

\begin{theorem}\label{thm:different chern numbers}
Assume the integer $n \geq 2$ is even, and let $(M, N)$ be modeled on $(\C \H^n, \C \H^{n-1})$ with $M$ closed.  
Then for all but possibly finitely-many integers $d \geq 2$ we have the following.  
Let $(M', N')$ be an arbitrary finite cover of $(M, N)$ such that $[N']$ is $d$-divisible, and let $X$ denote the $d$-fold cyclic branched cover.  
Then there exists a ratio of Chern numbers of $X$ which is not equal to the corresponding ratio of Chern numbers of a complex hyperbolic manifold.
\end{theorem}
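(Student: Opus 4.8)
The plan is to compute Chern numbers of the branched cover $X$ explicitly in terms of Chern numbers of $M$ and of the branch locus $N$, and then detect a ratio that deviates from the complex hyperbolic value. The key tool should be the relationship between the characteristic classes of a $d$-fold cyclic branched cover and those of the base. Let me think about this carefully.

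Complex hyperbolic manifolds satisfy strict proportionality: all their Chern numbers are determined up to scale by a single universal ratio, because the curvature is a constant multiple of a fixed algebraic curvature tensor. Specifically, for a complex hyperbolic manifold of complex dimension $n$, the Chern forms are proportional to powers of the Kähler form, so the ratios of Chern numbers equal the ratios of the corresponding symmetric functions evaluated on the Chern roots of $\mathbb{CH}^n$ (all equal). So to prove the theorem I need to show that $X$ does NOT satisfy this proportionality — that at least one Chern number ratio departs.

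Let me think about the structure of the branched cover. Let $f: X \to M'$ be the $d$-fold cyclic branched cover over $N'$, branched along the divisor $N'$. There's a standard formula. The branch locus downstairs is $N'$; upstairs there's a ramification divisor $R = f^{-1}(N')$, which maps isomorphically (degree 1) onto $N'$ as a divisor but with the appropriate multiplicity.

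The key relation for cyclic covers: if $L$ is the line bundle with $c_1(L) = \frac{1}{d}[N']$ (this exists because $[N']$ is $d$-divisible), then $X$ is built from $L$, and there's a formula for $TX$ in terms of $f^*TM'$ and the line bundle.

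Actually the standard fact: the tangent bundle satisfies
$$TX = f^*TM' \quad \text{away from ramification,}$$
but near ramification there's a twist. The total Chern class of $X$ can be computed. Let me recall: for a smooth $d$-fold cyclic cover $f: X \to M'$ branched over a smooth divisor $N'$ with $[N'] = d \cdot \alpha$ for some class $\alpha$, the ramification divisor $R \subset X$ satisfies $f^*N' = dR$ in divisor terms...

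Let me set up the Chern class formula. Write $c(TM')$ for the total Chern class of $M'$, and let $\nu$ be the normal bundle of $N'$ in $M'$ (a line bundle since $N'$ has complex codimension 1). The adjunction and the branched cover structure give, via the work of e.g. Hirzebruch or the formula used in such Chern number computations:
$$c(TX) = f^*\left(c(TM') \cdot \frac{1 + (d-1)\eta}{1+d\eta}\right) \cdot (1 + f^*\eta)$$
or something of this flavor, where $\eta = \frac{1}{d}[N'] = \alpha$. I'd need to derive the precise version. The upshot is that Chern numbers of $X$ become polynomials in the Chern numbers of $M'$, the Chern numbers of $N'$ (via the normal bundle and restriction), and $d$.

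Now here's the strategy for detecting non-proportionality. Complex hyperbolic manifolds $M'$ and their totally geodesic submanifolds $N' \cong$ (complex hyperbolic of dimension $n-1$) ALSO satisfy proportionality individually. All the data — Chern numbers of $M'$, Chern numbers of $N'$, and the intersection numbers involving $\alpha = \frac{1}{d}[N']$ — are proportional to powers of the respective volume forms, scaled by the covering degree $[M':M]$. Crucially, the total geodesic embedding $N \hookrightarrow M$ means the normal bundle and the restriction of $TM$ to $N$ are also determined by the complex hyperbolic geometry.

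So each Chern number of $X$ becomes a specific polynomial in $d$ (for fixed $(M,N)$, up to the overall scaling by covering degree which cancels in ratios). The claim is that there's a ratio of two such polynomials in $d$ that is a nonconstant rational function of $d$, hence not equal to the fixed complex hyperbolic ratio for all but finitely many $d$. The hypothesis that $n$ is even should enter because in even dimension a particular Chern number (I suspect a mixed one, or comparing $c_1^n$ against $c_n$ or a product like $c_2 c_1^{n-2}$) produces a polynomial in $d$ of a degree that distinguishes it from the "geodesic" value.

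The main obstacle, I expect, is twofold. First, deriving the exact Chern class formula for the branched cover and expressing everything cleanly in terms of the intersection numbers $\int_{N'} c(\nu)^a c(TN')^b$ — this is bookkeeping but must be done correctly, especially tracking the factor of $d$ and the twist by $\alpha$. Second, and more substantively, I need to identify which ratio works and prove the resulting rational function of $d$ is genuinely nonconstant. This requires using the proportionality principle for $\mathbb{CH}^{n-1}$ to evaluate the $N'$-contributions: I would compute the leading $d$-dependence of, say, $\int_X c_1(X)^n$ versus $\int_X c_n(X)$, show these are polynomials in $d$ of different degrees (or with incompatible coefficients), and conclude that their ratio cannot be the constant complex hyperbolic ratio $c_1^n/c_n = (n+1)^n/\binom{2n}{n}\cdots$ except possibly for finitely many $d$. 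Verifying that the even-dimensional hypothesis is exactly what makes a non-proportional ratio appear — rather than an accidental cancellation forcing proportionality — is where the real care lies; I would guard against the possibility that all ratios coincidentally match by exhibiting two explicit Chern numbers whose $d$-polynomials have provably distinct leading behavior.
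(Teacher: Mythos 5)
Your overall strategy --- compute the Chern numbers of $X$ directly from a cyclic-cover Chern class formula, reduce everything to $d$-independent data via proportionality for $M'$ and $N'$, and exhibit a ratio that is a nonconstant rational function of $d$ --- is genuinely different from the paper's argument, and it is viable in principle (it is essentially the route followed in the explicit computations of Deraux for $n=3$ and Hirzebruch/Mostow--Siu for $n=2$). But as written it is a plan, not a proof: the two steps you defer are exactly where the mathematical content lies. First, you never pin down the branched-cover formula; the correct statement is $c(TX) = f^*c(TM')\cdot(1+[R])/(1+d[R])$, where $R$ is the ramification divisor and $f^*[N'] = d[R]$ --- standard, and fixable. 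Second, and decisively, you do not prove that any ratio actually deviates. After pushing forward, a discrepancy such as $c_1^n(X) - (n+1)^{n-1}c_n(X)$ becomes $d$ times a polynomial in $(d-1)/d$ whose coefficients are intersection numbers of the form $\int_{N'}\bigl(c_1(M')|_{N'}\bigr)^{n-k}c_1(\nu')^{k-1}$ together with $\chi(N')$, and you must show these do not conspire to make that polynomial identically zero. That requires geometric input you never invoke: for instance, that $c_1$ of the normal bundle of the totally geodesic $N'$ is a nonzero multiple of the restricted K\"{a}hler class (the paper cites Belegradek for $c_1((N')^\perp)\neq 0$ and Goldman--Kapovich--Leeb for the exact value when $n=2$). ``Guarding against accidental cancellation'' \emph{is} the theorem; deferring that step leaves the proof empty.

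For comparison, the paper avoids computing any of these constants. It argues by contradiction: if all Chern ratios of $X$ matched those of $\C \P^n$, then Hirzebruch proportionality in the form $\Sigma = \chi/(n+1)$ --- valid only when the real dimension is divisible by $4$, which is precisely where the hypothesis that $n$ is even enters, a point your proposal never identifies --- applies to both $X$ and $M'$; combining this with Hirzebruch's signature formula for cyclic branched covers and inclusion--exclusion for $\chi(X)$ yields a relation
\begin{equation*}
0 = \frac{d-1}{n+1}\,\chi(N') - \frac{d^2-1}{3d}\,c_1((N_1')^\perp) + \frac{(d^2-1)(d^2-4)}{45d}\,c_2((N_2')^\perp) - \hdots
\end{equation*}
in which the coefficients are independent of $d$, the rational functions of $d$ appearing are linearly independent, and one coefficient, $\chi(N')$, is nonzero by Gauss--Bonnet; hence the relation can hold for at most finitely many $d$. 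If you want to complete your approach, you need an analogous anchor: either evaluate the proportionality constants for the totally geodesic pair $(\C \H^n, \C \H^{n-1})$ explicitly, or isolate one coefficient of your $d$-polynomial that is a priori nonzero and invoke linear independence of the resulting functions of $d$.
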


For $n=2$ we give a similar, but slightly different argument to prove the following stronger statement.

\begin{theorem}\label{thm:dimension 2}
For $n=2$ and for any integer $d \geq 2$, the $d$-fold cyclic branched cover $X$ described in Theorem \ref{thm:different chern numbers} satisfies $\ds{ c_1^2(X) - 3 c_2(X) \neq 0. }$
\end{theorem}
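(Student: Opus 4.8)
The plan is to prove Theorem \ref{thm:dimension 2} by computing the relevant characteristic number $c_1^2(X) - 3c_2(X)$ explicitly in terms of data attached to the pair $(M',N')$ and the branching degree $d$. The quantity $c_1^2 - 3c_2$ is precisely the obstruction to a surface being a ball quotient: by the Hirzebruch--Bogomolov--Yau inequality, a closed complex-hyperbolic (equivalently, a compact ball quotient) surface satisfies $c_1^2 = 3c_2$, and this is the "corresponding ratio of Chern numbers" being compared against. So the statement $c_1^2(X) - 3c_2(X) \neq 0$ says exactly that $X$ is not itself a ball quotient, which sharpens Theorem \ref{thm:different chern numbers} in the $n=2$ case.

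First I would set up the branched-cover formulas for Chern numbers. Since $X \to M'$ is a $d$-fold cyclic cover branched along the smooth divisor $N' \subset M'$, there are standard formulas (going back to Hirzebruch, and expressible via the log-Chern classes or via the ramification computation) relating $c_1(X)$ and $c_2(X)$ to $c_1(M')$, $c_2(M')$, the normal-bundle data of $N'$ in $M'$, and the degree $d$. Concretely, writing $\pi \colon X \to M'$ for the branched cover, one has $c_1(X) = \pi^*\bigl(c_1(M') - \tfrac{d-1}{d}[N']\bigr)$ after suitable identification, and one can integrate $c_1^2(X)$ and $c_2(X)$ over $X$ by pushing forward to $M'$, where $\pi_*\pi^* = d$ on cohomology and the correction terms involve the self-intersection $[N']^2$ and the Euler characteristic (or first Chern number) of $N'$ itself. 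The key numerical inputs are therefore $c_1^2(M')$, $c_2(M') = \chi(M')$, the self-intersection number $N' \cdot N'$, and $\chi(N')$; because $(M',N')$ is modeled on $(\C\H^2, \C\H^1)$, both $M'$ and the curve $N'$ are themselves ball quotients, so $c_1^2(M') = 3c_2(M')$ and $N'$ is a hyperbolic curve with $\chi(N') = -c_1(N') < 0$.

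Next I would assemble these into a single expression for $c_1^2(X) - 3c_2(X)$ as a function of $d$ and of the topological data above. The plan is to show this expression is a concrete polynomial in $d$ whose coefficients are determined by $\chi(M')$, $N'\cdot N'$, and $\chi(N')$, and then argue it cannot vanish. The crucial simplification comes from exploiting that $N'$ is totally geodesic in the complex-hyperbolic $M'$: this forces a rigid relation between the self-intersection $N'\cdot N'$ and the Gauss--Bonnet volume of $N'$ — indeed for a totally geodesic complex curve in $\C\H^2$ the normal bundle degree equals (up to a universal constant fixed by the curvature normalization) the Euler number of $N'$, so $N'\cdot N'$ and $\chi(N')$ are proportional. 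Substituting this relation collapses the expression for $c_1^2(X) - 3c_2(X)$ into a multiple of $\chi(N')$ times a manifestly nonzero factor depending on $d$.

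The main obstacle, and the heart of the argument, will be establishing that the resulting coefficient of $\chi(N')$ is strictly nonzero for \emph{every} integer $d \geq 2$, rather than merely for all but finitely many (which is all the general Theorem \ref{thm:different chern numbers} guarantees). This is where the $n=2$ case is special: the collapse to a single term proportional to $\chi(N')$ leaves a factor that is a simple function of $d$ — I expect something like $(d-1)$ times a positive rational — which one checks is never zero for $d \geq 2$, while $\chi(N') \neq 0$ because $N'$ is a hyperbolic curve of negative Euler characteristic. The delicate points to get right are the exact branched-cover Chern-number formulas (keeping track of the ramification correction terms and the pushforward coefficients) and the precise proportionality constant between $N'\cdot N'$ and $\chi(N')$ coming from total geodesy; once those are pinned down, the nonvanishing for all $d \geq 2$ should follow by inspection rather than by any asymptotic estimate.
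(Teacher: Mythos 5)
Your proposal is correct, but it takes a genuinely different route from the paper. The paper never touches the canonical bundle: it converts $c_1^2(X) - 3c_2(X)$ into $3\Sigma(X) - \chi(X)$ via $p_1 = c_1^2 - 2c_2$ and the Hirzebruch signature theorem, then evaluates $\Sigma(X)$ using Hirzebruch's ramified-covering signature formula $\Sigma(X) = d\,\Sigma(M') - \frac{d^2-1}{6d}\chi(N')$ together with the proportionality relation $\Sigma(M') = \frac{1}{3}\chi(M')$, and $\chi(X)$ by inclusion-exclusion. You instead compute the Chern numbers holomorphically: from the ramification formula $K_X = \pi^*K_{M'} + (d-1)Y$ with $\pi^*[N'] = d[Y]$, pushforward gives $c_1^2(X) = d\,c_1^2(M') + 2(d-1)K_{M'}\cdot N' + \frac{(d-1)^2}{d}(N')^2$, and with adjunction ($K_{M'}\cdot N' = -\chi(N') - (N')^2$), the ball-quotient relation $c_1^2(M') = 3\chi(M')$, and the totally-geodesic relation $(N')^2 = \frac{1}{2}\chi(N')$ (which is \cite[Proposition 2.5]{GoldmanKapovichLeeb} in the paper — you will need that citation, since "up to a universal constant" must be pinned down to exactly $\frac12$ for the cancellation to come out right), one gets $c_1^2(X) - 3c_2(X) = \frac{(d-1)^2}{2d}\chi(N')$, exactly the paper's formula; note the factor is quadratic in $(d-1)$, not the linear factor you guessed, and your sign convention "$\chi(N') = -c_1(N')$" should read $\chi(N') = -\deg K_{N'}$. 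Your approach is more self-contained in complex dimension $2$ — it needs only standard surface theory (adjunction, pullback/pushforward) and avoids both the signature theorem and Hirzebruch's branched-cover signature machinery. The paper's signature route is heavier here but is chosen deliberately because it is the argument that generalizes to the higher even-dimensional case of Theorem \ref{thm:different chern numbers}, where direct Chern-number computations for the branched cover are not available and only the signature formula of \cite{Hirzebruch-RamifiedCoverings} gives traction.
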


\begin{remark}
In the proof of Theorem \ref{thm:dimension 2}, we actually prove that
\begin{equation*}
c_1^2(X) - 3c_2(X) = m \frac{(d-1)^2}{2d} \chi(N)
\end{equation*}
where $m$ is the degree of the cover $(M', N') \longrightarrow (M, N)$ and $\chi(N)$ denotes the Euler characteristic of $N$.
\end{remark}

\begin{remark}
It is likely that Theorem \ref{thm:different chern numbers} holds for all dimensions $n$, all choices of ramification degree $d \geq 2$, and all ratios of Chern numbers.  
But the argument that we give below using Hirzebruch's proportionality and signature theorems is only sufficient for even complex dimensions and for some ratio of Chern numbers.
The condition of ``all but finitely many values of $d$" is due to the difficulty in giving exact calculations for the signature of intersections of appropriate submanifolds of $M$ (see Remark \ref{rmk:top Chern class} and the proof of Theorem \ref{thm:different chern numbers}).
A calculation by Deraux \cite{Deraux} when $n=3$ lends further evidence that this holds in all dimensions.  
Also, specific (and much more detailed) calculations for $n=2$ were computed for the original branched-cover examples by Mostow-Siu \cite{MostowSiu} and Hirzebruch \cite{Hirzebruch-LinesAndSurfaces}.
\end{remark}

Theorem \ref{thm:different chern numbers} shows that such branched cover manifolds $X$ are not complex hyperbolic manifolds, but Stover and Toledo gave an alternate argument for this fact for all such branched covers in \cite{ST}.  
The reason why we are interested in the ratio of these Chern numbers is due to a question posed by Deraux and Seshadri in \cite{DerauxSeshadri}.  
To state this question we need to first describe the main result from \cite{DerauxSeshadri}, which is as follows.  
Let $n \geq 1$ be an integer and let $\e > 0$.  
Then there exists $\d ( \e, n) > 0$ such that, if $Y$ is a compact K\"{a}hler $n$-manifold whose sectional curvatures (with respect to the K\"{a}hler metric) are contained in the $\d$-neighborhood of $[-1, -1/4]$, then the ratio of any two Chern numbers of $Y$ must be contained within the $\e$-neighborhood of the corresponding ratio of Chern numbers of a complex hyperbolic manifold.
The question asked by Deraux and Seshadri is the following.

\begin{question}[Deraux and Seshadri \cite{DerauxSeshadri}]
Does the result above hold if one only assumes the existence of an almost $1/4$-pinched Riemannian metric on a compact K\"{a}hler manifold?
\end{question}

This question is interesting due to a result of Hernandez \cite{Hernandez} and Yau and Zheng \cite{YauZheng}.
This result is that, if a compact K\"{a}hler manifold $Y$ admits a negatively $1/4$-pinched Riemannian metric $g$ (which is not assumed to be K\"{a}hler), then $Y$ is isometric to a complex hyperbolic manifold.  
In particular, not only must $g$ be K\"{a}hler, but it must actually equal the standard complex hyperbolic metric.
This result lends some anecdotal evidence that the question of Deraux and Seshadri may possibly be answered in the affirmitive.

The author proved in \cite{MinemyerKahler} that the complex hyperbolic branched cover manifolds of Stover and Toledo admit an almost negatively $1/4$-pinched metric, provided the normal injectivity radius of the branching locus is sufficiently large.  
This result, combined with \cite{DerauxSeshadri} and Theorem \ref{thm:different chern numbers}, give the following two corollaries.

\begin{cor}\label{cor:corollary 1}
The answer to the question of Deraux and Seshadri is ``No".  
More specifically, there exists a sequence of compact K\"{a}hler $n$-manifolds $(X_k)$ whose ratio of Chern numbers are a bounded distance away from those of a complex hyperbolic manifold, but which admit a Riemannian metric with all sectional curvatures contained in the interval $[-1 - (1/k) , (-1/4)+ (1/k)]$. 
\end{cor}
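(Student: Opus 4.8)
The plan is to convert the single ``bad'' ratio furnished by Theorem~\ref{thm:different chern numbers} into an entire sequence by exploiting the invariance of Chern number ratios under finite unbranched covers, while simultaneously improving the pinching via the construction of \cite{MinemyerKahler}. Fix an even integer $n \geq 2$ and a ramification degree $d \geq 2$ for which Theorem~\ref{thm:different chern numbers} applies (for $n=2$ one may instead use Theorem~\ref{thm:dimension 2}, valid for every $d$), and choose a pair $(M', N')$ with $[N']$ being $d$-divisible. The associated $d$-fold cyclic branched cover $X$ is a closed K\"ahler manifold (by Zheng~\cite{Zheng}) possessing some ratio of Chern numbers that differs from the corresponding ratio of a complex hyperbolic manifold by a fixed amount $\eta > 0$.

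I would then produce a tower of finite covers $(M'_k, N'_k) \to (M', N')$ along which the normal injectivity radius of the branching locus $N'_k$ tends to infinity. Because $\Gamma < \mathrm{PU}(n,1)$ is residually finite, one can pass to increasingly deep finite covers in which the injectivity radius of $M'_k$, and hence the normal injectivity radius of $N'_k$, grows without bound; one must also verify that $d$-divisibility of the fundamental class persists for the preimage $[N'_k]$, which it does since $[N'_k]$ is carried (up to multiplicity) by the pullback of $[N']$. Letting $X_k$ be the $d$-fold cyclic branched cover of $M'_k$ about $N'_k$, the cyclic branching data pulls back compatibly, so each $X_k$ is a finite unbranched cover of $X$.

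Since the tangent bundle and all Chern classes pull back under a covering map, whereas the fundamental class multiplies by the degree, every top-dimensional Chern number of $X_k$ is the degree times the corresponding Chern number of $X$; hence the relevant ratio of Chern numbers is constant along the tower, and each $X_k$ remains exactly the distance $\eta$ from the complex hyperbolic ratio. On the other hand, since the normal injectivity radius of $N'_k$ diverges, \cite{MinemyerKahler} equips each $X_k$ with a Riemannian metric whose sectional curvatures lie in an interval $[-1 - \e_k, -1/4 + \e_k]$ with $\e_k \to 0$; after reindexing I may assume $\e_k \leq 1/k$, so that these curvatures lie in $[-1 - (1/k), (-1/4) + (1/k)]$. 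The sequence $(X_k)$ then consists of compact K\"ahler $n$-manifolds that are almost negatively $1/4$-pinched in the Riemannian sense yet whose Chern number ratios stay a bounded distance from those of any complex hyperbolic manifold. Were the question of Deraux and Seshadri answered affirmatively, sufficiently small $1/k$ would force these ratios to within any prescribed $\e$ of the complex hyperbolic values, contradicting the fixed gap $\eta$; thus the answer is ``No''.

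The step I expect to be the main obstacle is the geometric construction in the second paragraph: exhibiting a tower of covers along which the normal injectivity radius of the branching locus genuinely diverges, so that the hypotheses of \cite{MinemyerKahler} are met with $\e_k \to 0$, all while keeping $[N'_k]$ $d$-divisible. The cohomological half of the argument---the covering invariance of Chern number ratios---is routine, but it is precisely this invariance that transforms the isolated discrepancy of Theorem~\ref{thm:different chern numbers} into a uniformly bounded-away sequence as the pinching is sharpened.
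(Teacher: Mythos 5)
Your proposal reaches the corollary by a genuinely different route than the paper, and its main mechanism is sound. The paper keeps the quantitative formula \eqref{eqn:proof of Cor 1} in play: for the covers $(M_k,N_k)$ of $(M,N)$ supplied by \cite{MinemyerKahler}, the deviation grows like the covering degree $m_k$, which is incompatible with the Chern ratios of $X_k$ converging to those of $\C \P^n$. You instead fix one bad branched cover $X$ and arrange every $X_k$ to be a finite \emph{unbranched} cover of $X$, so that all Chern numbers multiply by the covering degree and the offending ratio is literally constant. Your key geometric claim --- that the $d$-fold cyclic branched cover of $(M'_k,N'_k)$ can be taken to be an unbranched cover of $X$ --- is correct, though it deserves the one-line proof you omit: the fiber product $X \times_{M'} M'_k$ is an $m_k$-fold unbranched cover of $X$ and a $d$-fold branched cover of $M'_k$, and it is connected and cyclic because a meridian of $N'_k$ maps to a meridian of $N'$, so the pulled-back monodromy $\pi_1(M'_k \setminus N'_k) \to \mathbb{Z}/d$ still contains a generator in its image (this also shows you never need to re-verify $d$-divisibility upstairs). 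Your version has a real advantage: it controls the \emph{ratios} directly. The paper's closing implication (that convergence of ratios would force the right-hand side of \eqref{eqn:proof of Cor 1} to tend to $0$) is delicate, since the Chern numbers of $X_k$ themselves grow like $m_k$, so convergence of ratios only forces the deviation to be $o(m_k)$; the paper's argument survives because the deviation is linear in $m_k$ with nonzero coefficient, but your argument avoids the issue entirely. What the paper's route buys in exchange is a stronger conclusion: the deviation actually blows up as $k \to \infty$.

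The genuine gap is in your second paragraph. Residual finiteness of $\Gamma$ does produce covers whose systole, hence injectivity radius, tends to infinity, but the inference ``injectivity radius of $M'_k$ large $\Rightarrow$ normal injectivity radius of $N'_k$ large'' is false. The normal injectivity radius is governed by geodesic arcs leaving $N'_k$ orthogonally and returning to it orthogonally; such arcs need not close up into short closed geodesics, so a lower bound on the systole gives no lower bound on them. (Already for the flat torus $\R^2/(k\mathbb{Z})^2$ with the image of the $x$-axis and its translates as the submanifold, the systole is $k$ while the normal injectivity radius of the preimage stays $1/2$ for every $k$.) What is actually needed is separability of $\pi_1(N')$ in $\pi_1(M')$, which is precisely the arithmetic (congruence, simple-type) input in \cite{ST} and \cite{MinemyerKahler}, not bare residual finiteness. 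The fix is easy and keeps your structure intact: take the covers $(M_k,N_k)$ of $(M,N)$ that \cite{MinemyerKahler} provides --- with $[N_k]$ $d$-divisible and normal injectivity radius of $N_k$ tending to infinity --- and replace your $(M'_k,N'_k)$ by a connected component of the fiber product $M_k \times_M M'$ together with the preimage of $N$. This simultaneously covers $(M',N')$ and $(M_k,N_k)$; normal injectivity radius cannot decrease when passing to covers (distinct lifts of a path have distinct endpoints, so colliding normal geodesics project to colliding normal geodesics), and $d$-divisibility of the Poincar\'{e} dual class pulls back. Note you do not need a nested tower, only that each $(M'_k,N'_k)$ covers the fixed $(M',N')$. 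With that substitution your proof is complete.
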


\begin{cor}\label{cor:corollary 2}
The almost negatively $1/4$-pinched Riemannian metric constructed in \cite{MinemyerKahler} (see also \cite{MinemyerCurvatureOperators}) is not K\"{a}hler.
\end{cor}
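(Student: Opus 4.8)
The plan is to deduce the corollary by contradiction, feeding Theorem~\ref{thm:different chern numbers} into the Deraux--Seshadri estimate. Suppose that for one of the branched covers $X$ the almost $1/4$-pinched metric $g$ of \cite{MinemyerKahler} were actually Kähler. The key structural input I would use is that the pinching of $g$ improves as the normal injectivity radius of the branching locus grows: passing to deeper covers $(M', N')$ produces a sequence of branched covers $X_k$ on which the constructed metric $g_k$ has all sectional curvatures inside the $\delta_k$-neighborhood of $[-1, -1/4]$ with $\delta_k \to 0$. Thus, under the Kähler assumption, each $(X_k, g_k)$ would satisfy the hypotheses of the main theorem of \cite{DerauxSeshadri}.

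First I would fix any $\epsilon > 0$ and let $\delta(\epsilon, n) > 0$ be the threshold supplied by \cite{DerauxSeshadri}. For all $k$ large enough that $\delta_k < \delta(\epsilon, n)$, the Kähler hypothesis would force every ratio of Chern numbers of $X_k$ to lie within $\epsilon$ of the corresponding complex hyperbolic ratio. On the other hand, Theorem~\ref{thm:different chern numbers} (together with the explicit computation recorded in the remark after Theorem~\ref{thm:dimension 2}, namely $c_1^2(X) - 3c_2(X) = m \tfrac{(d-1)^2}{2d}\chi(N)$) exhibits a ratio of Chern numbers of $X_k$ that is bounded away from its complex hyperbolic value by a constant $D_0 > 0$ independent of $k$; this uniform separation is precisely the content of Corollary~\ref{cor:corollary 1}. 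Taking $\epsilon = D_0/2$ then yields, for all large $k$, a ratio that is simultaneously at least $D_0$ and at most $D_0/2$ from the complex hyperbolic value, a contradiction.

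Hence the metric $g_k$ cannot be Kähler for $k$ large, which proves the corollary. The main obstacle is not the logical skeleton, which is short, but ensuring the compatibility of the two quantitative inputs: that the pinching deviation $\delta_k$ can be driven below the Deraux--Seshadri threshold $\delta(\epsilon, n)$ while the Chern-ratio discrepancy remains uniformly bounded below by $D_0$. The former is the injectivity-radius estimate of \cite{MinemyerKahler}, and the latter is the uniformity already established in Theorem~\ref{thm:different chern numbers} and Corollary~\ref{cor:corollary 1}; once both are in hand the contradiction is immediate and no further computation is needed.
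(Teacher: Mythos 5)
Your proposal is correct and takes essentially the same route as the paper: the paper deduces Corollary \ref{cor:corollary 2} immediately from Corollary \ref{cor:corollary 1}, whose proof is exactly the contradiction you describe---if the constructed metrics $g_k$ were K\"{a}hler, the Deraux--Seshadri theorem would force the Chern-number ratios of $X_k$ toward the complex hyperbolic values, contradicting the uniform separation coming from Theorem \ref{thm:different chern numbers}. Your write-up simply unpacks the paper's ``follows immediately'' step into an explicit $\epsilon$--$\delta$ argument (choosing $\epsilon = D_0/2$), which is a faithful rendering of the intended proof.
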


Note that the two corollaries do not immediately follow from Theorem \ref{thm:different chern numbers}.  
Suppose that $(M, N)$ is modeled on $(\C \H^n, \C \H^{n-1})$ with $M$ closed.  
Let $(M', N')$ be a finite cover so that $[N']$ is $d$-divisible, and let $X$ be the $d$-fold cyclic branched cover of $M'$ about $N'$.
Given $\e > 0$, a metric is constructed in \cite{MinemyerKahler} on such a branched cover $X$ which is $\e$-close to being negatively $1/4$-pinched.  
But, as $\e \to 0$, one needs the normal injectivity radius about $N'$ in $M'$ to approach infinity.  
This requires one to take increasingly large finite covers of the pair $(M, N)$.
In theory, it is possible that the ratio of Chern numbers of $X$ approach those of a complex hyperbolic manifold as one chooses larger covers $(M', N')$ to satisfy this condition on the normal injectivity radius.  
We will see in Section \ref{sect:proofs} that this is not the case and, moreover, the exact opposite happens.

The proofs of Theorems \ref{thm:different chern numbers} and \ref{thm:dimension 2} are simple calculations that use much deeper results, mostly due to Hirzebruch.  
Namely, we use Hirzebruch proportionality \cite{Hirzebruch-Proportionality}, the Hirzebruch signature theorem \cite[Theorem 8.2.2]{Hirzebruch-TopAlgGeomBook} (see also \cite[Theorem 19.4]{MilnorStasheff}), and Hirzebruch's work on the signature of cyclic branched coverings \cite{Hirzebruch-RamifiedCoverings}.
To some experts the calculations in our proofs are trivial and, to that end, the latter two references of Hirzebruch were easy for the author to find.  
But the author, being much more of a geometer than a topologist, was unaware of Hirzebruch proportionality and its immediate consequences.  
References for this were much more difficult for the author to find and, for this reason, in Section \ref{sect:Hirzebruch} below we give a brief overview of Hirzebruch proportionality for Chern numbers and its relationship to the signature and Euler characteristic of a closed complex hyperbolic manifold.  
We then discuss Hirzebruch's work on the signature of a cyclic branched cover in Section \ref{sect:proofs}, where we also prove the theorems and corollaries listed above.

\subsection*{Acknowledgments} This paper is the direct result of a terrific meeting that I had with Domingo Toledo.  
Despite the fact that we had never met, Domingo and his wife Paula invited me to their house for an afternoon while I was visiting Rachel Skipper at the University of Utah.  
This project came from that discussion, and I am incredibly grateful that Domingo took the time to meet with me during my trip.  
I would also like to thank Matthew Stover for helpful emails and for pointing out reference \cite{Hirzebruch-Proportionality}.
Lastly, I would like to thank Jean Lafont for several (really, many) helpful discussions.

%beginning of section 2
\section{Hirzebruch Proportionality for Chern Numbers}\label{sect:Hirzebruch}
The author is unaware of any reference where the content of this section is explicitly recorded in English, although Lafont and Roy discuss a very similar result for Pontrjagin numbers \cite[Theorem A]{LafontRoy}.  
All of the content in this Section is due to Hirzebruch \cite{Hirzebruch-Proportionality}, and is also referenced in the proof of Theorem 22.2.1 in \cite{Hirzebruch-TopAlgGeomBook}.

Let us first fix some notation.  
Following the standard definition of \cite[Chapter 16]{MilnorStasheff}, a {\it partition} of a positive integer $n$ is an unordered sequence $I = i_1, \hdots , i_r$ of positive integers that sum to $n$.
For a manifold $M$ of complex dimension $n$, we use the following notation.
\begin{itemize}
\item $\Sigma(M)$ will denote the signature of $M$.
\item $p_I(M)$ will denote the Pontrjagin number with partition $I$.
\item $c_I(M)$ will denote the Chern number with partition $I$.
\item $\chi(M)$ will denote the Euler characteristic of $M$.
\end{itemize}
Let $\C \P^n$ denote complex projective space of dimension $n$.
The standard background reference is \cite{MilnorStasheff}.

In terms of Chern numbers of closed complex hyperbolic manifolds, Hirzebruch's proportionality can be stated as follows.  

\begin{theorem}[Hirzebruch \cite{Hirzebruch-Proportionality}]\label{thm:Hirzebruch}
Let $M^n$ be a closed complex hyperbolic manifold.  
Then there exists a constant $s$ depending on $M$, but not on the partition $I$, such that
\begin{equation*}
	c_I(M) = s \cdot c_I(\C \P^n).
\end{equation*}
\end{theorem}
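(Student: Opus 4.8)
The plan is to prove Hirzebruch proportionality by exploiting the fact that complex hyperbolic space $\C \H^n$ is the noncompact dual of complex projective space $\C \P^n$ in the sense of symmetric spaces. Both are symmetric spaces of $\mathrm{U}(n)$-type, $\C \P^n = \mathrm{SU}(n+1)/\mathrm{S}(\mathrm{U}(n) \times \mathrm{U}(1))$ being of compact type and $\C \H^n = \mathrm{SU}(n,1)/\mathrm{S}(\mathrm{U}(n) \times \mathrm{U}(1))$ being its noncompact dual. First I would recall that for any closed locally symmetric space $M = \Gamma \setminus G/K$ of noncompact type, the characteristic forms representing the Chern classes are $G$-invariant, hence parallel, and so correspond under the duality to the characteristic forms of the compact dual. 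The key structural input is that the curvature tensor of $\C \H^n$ is the negative of that of $\C \P^n$ (after identifying tangent spaces via the Cartan decomposition), which forces the invariant Chern forms to match up to sign.

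The main computation is then as follows. For a closed complex hyperbolic $M^n = \Gamma \setminus \C \H^n$, the Chern number $c_I(M)$ is obtained by integrating the invariant top-degree form $c_I$ against the fundamental class of $M$. Because $c_I$ is represented by a $\C \H^n$-invariant form, this integral factors as a universal constant (depending only on the curvature, i.e.\ on the symmetric-space structure and the partition $I$) times the volume $\mathrm{vol}(M)$. By the curvature-duality just noted, that universal constant is $(-1)^n$ times the corresponding constant for the compact dual $\C \P^n$; since $c_I$ has total degree $2n$ and involves Chern forms built from the curvature, the sign works out uniformly across all partitions $I$ of $n$. Writing the single proportionality factor as
\begin{equation*}
	s = (-1)^n \frac{\mathrm{vol}(M)}{\mathrm{vol}(\C \P^n)},
\end{equation*}
one obtains $c_I(M) = s \cdot c_I(\C \P^n)$ simultaneously for every partition $I$, with $s$ manifestly independent of $I$. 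The independence of $I$ is the whole content of the theorem, and it is forced precisely because the \emph{same} volume ratio and the \emph{same} overall sign appear no matter which invariant polynomial in the curvature we integrate.

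The hard part, and the step I would dwell on, is making the duality correspondence between invariant forms on $\C \H^n$ and on $\C \P^n$ precise enough to extract a single constant $s$ independent of $I$. Concretely, one must check that the Chern--Weil forms computed from the curvature tensor of $\C \H^n$ and from that of $\C \P^n$ differ only by an overall factor governed by the sign reversal of curvature under dualization, and that this does not interact with the choice of invariant polynomial $c_I$ beyond contributing the uniform $(-1)^n$. This is exactly Hirzebruch's original argument, phrased in Chern--Weil language rather than via the cohomology of compact Lie groups. Since the statement is attributed to Hirzebruch and the full argument would require developing the Chern--Weil theory of locally symmetric spaces together with Hirzebruch's comparison of invariant polynomials, in the exposition I would either cite \cite{Hirzebruch-Proportionality} for the precise value of $s$ or sketch the Chern--Weil computation and defer the verification of the sign bookkeeping to that reference.
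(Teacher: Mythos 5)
The first thing to note is that the paper does not prove this statement at all: it is stated as Hirzebruch's theorem and attributed outright to \cite{Hirzebruch-Proportionality}, with the surrounding text only remarking that the content also appears in the proof of Theorem 22.2.1 of \cite{Hirzebruch-TopAlgGeomBook}. So there is no in-paper argument to compare yours against; the relevant comparison is with Hirzebruch's proportionality principle itself, and your sketch is a correct outline of it. The key points are all present and right: the Chern forms of $M = \Gamma \setminus \C\H^n$ are induced by invariant forms on $\C\H^n$; an invariant form of top degree $2n$ is a constant multiple of the volume form, so each Chern number is a universal constant times $\mathrm{vol}(M)$; under symmetric-space duality (Cartan decomposition $\mathfrak{g} = \mathfrak{k} \oplus \mathfrak{p}$ versus $\mathfrak{k} \oplus i\mathfrak{p}$) the curvature reverses sign, so the $c_k$-form flips by $(-1)^k$ and any monomial $c_{i_1} \cdots c_{i_r}$ with $i_1 + \cdots + i_r = n$ flips by the \emph{uniform} sign $(-1)^n$. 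That uniformity is exactly what makes $s = (-1)^n\,\mathrm{vol}(M)/\mathrm{vol}(\C\P^n)$ independent of the partition $I$, which you correctly identify as the whole content of the theorem. Two pieces of bookkeeping you should make explicit in a write-up, though neither is a logical gap: (i) the volume ratio only has the stated meaning once the metrics on $\C\H^n$ and $\C\P^n$ are normalized dually (say holomorphic sectional curvatures $-c$ and $+c$), since rescaling the hyperbolic metric rescales $\mathrm{vol}(M)$ but not the integer $c_I(M)$; and (ii) one should say why Chern--Weil theory for the invariant K\"{a}hler metric computes the topological Chern classes of $M$ (the Chern connection is compatible with the complex structure). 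As a sanity check your formula gives the right signs: for $n=1$, $s = \chi(M)/2 < 0$ matching $(-1)^1 \mathrm{vol}(M)/\mathrm{vol}(\C\P^1) < 0$, and for $n=2$, $s = \chi(M)/3 > 0$ matching $(-1)^2 > 0$.
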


\begin{cor}
The ratio of Chern numbers for any closed complex hyperbolic manifold is equal to the corresponding ratio of Chern numbers for $\C \P^n$.  
That is, for any closed complex hyperbolic manifold $M$, 
\begin{equation*}
	\frac{c_I(M)}{c_J(M)} = \frac{c_I(\C \P^n)}{c_J(\C \P^n)}
\end{equation*}	
for all partitions $I, J$ of $n$.
\end{cor}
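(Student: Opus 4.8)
The corollary follows almost immediately from Theorem~\ref{thm:Hirzebruch}, so the plan is to unwind the definitions and cancel the common proportionality constant. First I would fix a closed complex hyperbolic manifold $M$ of complex dimension $n$ and two arbitrary partitions $I$ and $J$ of $n$, so that $c_I(M)$ and $c_J(M)$ are both well-defined Chern numbers. Invoking Theorem~\ref{thm:Hirzebruch}, I would record the two instances
\begin{equation*}
c_I(M) = s \cdot c_I(\C \P^n) \qquad \text{and} \qquad c_J(M) = s \cdot c_J(\C \P^n),
\end{equation*}
where the key point, emphasized in the theorem statement, is that the constant $s$ is the \emph{same} in both equations: it depends only on $M$ and not on the choice of partition.

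The main step is then to form the ratio. Assuming $c_J(\C \P^n) \neq 0$ (which also forces $c_J(M) \neq 0$, so the ratio on the left is meaningful), I would divide the first equation by the second and cancel the common factor $s$:
\begin{equation*}
\frac{c_I(M)}{c_J(M)} = \frac{s \cdot c_I(\C \P^n)}{s \cdot c_J(\C \P^n)} = \frac{c_I(\C \P^n)}{c_J(\C \P^n)}.
\end{equation*}
This is precisely the claimed equality of ratios.

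The only genuine subtlety worth flagging is the nonvanishing of the denominators, since a ratio of Chern numbers is only meaningful when the denominator is nonzero. The clean way to handle this is to observe that all Chern numbers of $\C \P^n$ are strictly positive (a standard computation shows each $c_I(\C \P^n) > 0$), and that the proportionality constant $s$ is itself nonzero; indeed $s$ is positive since, for instance, the top Chern number $c_n(M) = \chi(M)$ has the expected sign for a closed complex hyperbolic manifold, forcing $s > 0$ when compared against $c_n(\C \P^n) = n+1 > 0$. Consequently $c_J(M) = s \cdot c_J(\C \P^n) \neq 0$ whenever $c_J(\C \P^n) \neq 0$, so no division by zero occurs and the cancellation above is legitimate. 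I do not expect any real obstacle here; the corollary is a formal consequence of the single shared constant $s$ in Hirzebruch proportionality, and the entire content lies in Theorem~\ref{thm:Hirzebruch} itself.
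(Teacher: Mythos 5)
Your proof is correct and is exactly the argument the paper intends: the paper states this as an immediate corollary of Theorem~\ref{thm:Hirzebruch}, with the whole content being the cancellation of the single partition-independent constant $s$, which you carry out carefully. One minor caveat: your parenthetical claim that $s>0$ fails when $n$ is odd, since then $\chi(M)<0$ by Gauss--Bonnet (the sign is $(-1)^n$ for a closed negatively curved manifold of real dimension $2n$), so $s = \chi(M)/(n+1)<0$; but the argument only needs $s \neq 0$, which does hold, so nothing breaks.
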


Theorem \ref{thm:Hirzebruch} and the following corollary are well-known to experts.  
Indeed, references \cite{MostowSiu}, \cite{Hirzebruch-LinesAndSurfaces}, \cite{Zheng}, and \cite{Deraux} all, in some capacity, reference the fact that the ratio of Chern numbers of closed complex hyperbolic manifolds are equal to a constant independent of $M$.
Reference \cite{LafontRoy}, as well as an email from Matthew Stover, confirmed to the author that one uses the dual $\C \P^n$ to compute these ratios of Chern numbers. 
Formulas to compute Chern numbers for $\C \P^n$ are well-known (see, for example, \cite[Example 15.6]{MilnorStasheff}). 

One immediate consequence of Theorem \ref{thm:Hirzebruch} is that, given a closed complex hyperbolic manifold $M$, every Chern number $c_I(M)$ can be written as a known constant times the Euler characteristic $\chi(M)$.
This is done by considering the ratio of $c_I(M)$ with $c_n(M) = \chi(M)$.
For example, if we let $I = n-1, 1$, we have
\begin{equation*}
\frac{c_I(M)}{c_n(M)} = \frac{ {n+1 \choose n-1} {n+1 \choose 1}}{{n+1 \choose n}} = \frac{n(n+1)}{2} \quad \Longrightarrow \quad c_I(M) = \frac{n(n+1)}{2} \chi(M).
\end{equation*}

The following Corollary is easy to find in the literature when $n=2$ (see, for example, \cite{Ville}).  
For $n>2$ the author only knows of the original source \cite{Hirzebruch-Proportionality}.
We give a quick proof of the result since it is crucial for Section \ref{sect:proofs}.

\begin{cor}[Satz 1 (4) of \cite{Hirzebruch-Proportionality}]\label{cor:signature Euler characteristic}
Let $n > 1$ be an even integer, and let $M$ be any closed complex $n$-manifold whose ratio of Chern numbers is equal to those of $\C \P^n$ (for every possible ratio).  
Then the signature $\Sigma(M)$ and Euler characteristic $\chi(M)$ are related by the formula
\begin{equation*}
	\Sigma(M) = \frac{1}{n+1} \chi(M).
\end{equation*}
\end{cor}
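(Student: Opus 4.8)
The plan is to show that both $\Sigma(M)$ and $\chi(M)$ are universal rational linear combinations of the Chern numbers $c_I(M)$ (with coefficients depending only on $n$), and then to invoke the hypothesis that every such Chern number is proportional, by a single constant, to the corresponding Chern number of $\C \P^n$. First I would extract that constant from the ratio hypothesis. Since $c_I(M)/c_J(M) = c_I(\C \P^n)/c_J(\C \P^n)$ for all partitions $I, J$ of $n$, taking $J$ to be the one-part partition $(n)$, so that $c_J(M) = c_n(M) = \chi(M)$, produces a single number
\begin{equation*}
s := \frac{\chi(M)}{\chi(\C \P^n)} = \frac{\chi(M)}{n+1} \qquad \text{with} \qquad c_I(M) = s \cdot c_I(\C \P^n)
\end{equation*}
for every partition $I$ of $n$. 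This mirrors Theorem \ref{thm:Hirzebruch}, but here it is deduced from the ratio hypothesis alone rather than from hyperbolicity.

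Next I would express the signature through Chern numbers. By the Hirzebruch signature theorem \cite[Theorem 19.4]{MilnorStasheff}, and using that $\dim_{\R} M = 2n \equiv 0 \pmod 4$ (this is exactly where the evenness of $n$ enters), $\Sigma(M)$ equals the $L$-genus evaluated on $[M]$, which is a fixed rational linear combination of the Pontrjagin numbers $p_I(M)$ with coefficients independent of $M$. In turn, the standard identity expressing the total Pontrjagin class of a complex manifold in terms of its total Chern class,
\begin{equation*}
1 - p_1 + p_2 - \cdots = (1 + c_1 + c_2 + \cdots)(1 - c_1 + c_2 - \cdots),
\end{equation*}
(see \cite[\S 15]{MilnorStasheff}) writes each $p_I(M)$ as a universal integer polynomial in the Chern numbers $c_J(M)$. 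Composing these two facts yields universal rational coefficients $a_J$, depending only on $n$, with $\Sigma(M) = \sum_J a_J\, c_J(M)$.

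Finally I would combine everything. Substituting $c_J(M) = s \cdot c_J(\C \P^n)$ into this expression gives $\Sigma(M) = s \sum_J a_J\, c_J(\C \P^n) = s \cdot \Sigma(\C \P^n)$, while $\chi(M) = c_n(M) = s \cdot \chi(\C \P^n)$ directly, so that $\Sigma(M)/\chi(M) = \Sigma(\C \P^n)/\chi(\C \P^n)$. It then remains to compute the two model invariants: $\chi(\C \P^n) = n+1$, and, because $n$ is even, the middle cohomology $H^n(\C \P^n)$ is spanned by the $(n/2)$-th power of the hyperplane class, whose self-intersection is $+1$, so $\Sigma(\C \P^n) = 1$. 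This gives $\Sigma(M) = \frac{1}{n+1}\chi(M)$, as claimed.

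I expect the only genuinely non-formal step to be the bookkeeping ensuring that the coefficients $a_J$ are the same for $M$ as for $\C \P^n$, i.e.\ that the passage ``signature $\to$ $L$-genus in Pontrjagin numbers $\to$ polynomial in Chern numbers'' is uniform across all closed complex $n$-manifolds; everything else is either the ratio hypothesis, a standard characteristic-class identity, or the elementary computation of the signature of $\C \P^n$. The evenness of $n$ plays a single essential role, namely placing $M$ in a real dimension divisible by $4$ so that the signature is defined and the signature theorem applies; for odd $n$ both sides of the asserted identity vanish trivially.
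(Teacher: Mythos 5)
Your proposal is correct and follows essentially the same route as the paper's proof: both express $\Sigma(M)$ as a universal linear combination of Chern numbers via the Hirzebruch signature theorem and the Pontrjagin-to-Chern conversion, use the ratio hypothesis to reduce all Chern numbers to a single proportionality constant tied to $\chi(M)$, and pin down the constant by the computation $\Sigma(\C \P^n) = 1$, $\chi(\C \P^n) = n+1$. The only cosmetic difference is that you carry the constant $s$ explicitly while the paper writes $\Sigma(M) = f(n)\chi(M)$ and evaluates at $M = \C \P^n$; these are the same argument.
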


\begin{proof}
By Hirzebruch's Signature Theorem (\cite[Theorem 8.2.2]{Hirzebruch-TopAlgGeomBook} or \cite[Theorem 19.4]{MilnorStasheff}), one knows that $\Sigma(M)$ can be written as a linear combination of the Pontrjagin numbers.  
A specific formula is given by the {\it L-genus} of $M$, but that is not important here.  
It is well known \cite[Corollary 15.5]{MilnorStasheff} that each Pontrjagin number can be written as a linear combination of Chern numbers.  
Via substitution, one has that
\begin{equation*}
	\Sigma(M) = \sum_{I \in [n]} \alpha(I, n) c_I(M)
\end{equation*}
where $[n]$ denotes the set of partitions of $n$.  

By assumption, all Chern numbers of $M$ can be written as a specific constant times the Euler characteristic of $M$ as noted above.  
This gives
\begin{equation}\label{eqn:signature and Euler characteristic}
	\Sigma(M) = f(n) \chi(M)
\end{equation}
where the coefficient $f(n)$ is independent of $M$.
Equation \eqref{eqn:signature and Euler characteristic} holds for all manifolds $M$ whose ratio of Chern numbers is equal to the corresponding ratio of Chern numbers for $\C \P^n$.  
In particular, it holds for $\C \P^n$.  
It is again well known that $\Sigma(\C \P^n) = 1$ \cite[Page 225]{MilnorStasheff} and $\chi(\C \P^n) = n+1$ (\cite[Theorem 14.4]{MilnorStasheff}, or is obvious from the standard CW-structure of $\C \P^n$).  
Solving for $f$ gives the desired formula.

\end{proof}

\section{The signature of a ramified covering and proofs of main theorems}\label{sect:proofs}
In this section we review the main results of \cite{Hirzebruch-RamifiedCoverings} and \cite{Viro}, and then prove the statements from the Introduction.
To this end, let us first recall the set-up for Theorems \ref{thm:different chern numbers} and \ref{thm:dimension 2}.  
Let $n \geq 1$ be an even integer and $d \geq 2$ an integer.
Suppose that $(M, N)$ is modeled on $(\C \H^n, \C \H^{n-1})$ with $M$ closed.  
Let $(M', N')$ be a finite cover of degree $m$ such that $[N'] \in H_{n-2}(M', \mathbb{Z})$ is $d$-divisible.
Let $X$ denote the $d$-fold cyclic branched cover of $M'$ about $N'$.  
We use $Y$ to denote the branching locus in $X$, which is isometric to $N'$.
The general situation is as follows.
\begin{equation*}
	(X,Y) \underset{\text{d-fold branched cover}}{\looooongrightarrow} (M', N') \underset{\text{degree m regular cover}}{\looooongrightarrow} (M, N).
\end{equation*}

\subsection{The signature of $X$ in terms of $M'$ and $N'$}
First, almost all of the material from this subsection comes from \cite{Hirzebruch-RamifiedCoverings} and \cite{Viro}.
Where possible, we try to use the same notation as \cite{Hirzebruch-RamifiedCoverings} for readability.

Consider the rational function
\begin{equation*}
	\text{sign}(t) = \frac{(1+t)^d + (1-t)^d}{(1+t)^d - (1-t)^d} \cdot t.
\end{equation*}
In \cite[Section 5]{Hirzebruch-RamifiedCoverings} Hirzebruch proves that this function determines the signature of $X$ in terms of $M'$ and $N'$, where $d$ as above denotes the degree of the branched cover.  
How this is interpreted is as follows.  
Expand sign$(t)$ as a formal power series in $t$.  
The first few terms of this series are
\begin{equation*}
\text{sign}(t) = \frac{1}{d} + \frac{d^2 - 1}{3d} t^2 - \frac{(d^2-1)(d^2-4)}{45d} t^4 + \hdots.
\end{equation*}
Then
\begin{equation}\label{eqn:signature formula}
\Sigma(M') = \frac{1}{d} \Sigma(X) + \frac{d^2 - 1}{3d} \Sigma(Y_2) - \frac{(d^2-1)(d^2-4)}{45d} \Sigma(Y_4) + \hdots .
\end{equation}
where, for each positive integer $r$, $Y_r$ is a submanifold of $X$ whose construction we explain below.
But first note that one can rewrite equation \eqref{eqn:signature formula} as
\begin{equation}\label{eqn:signature formula 2}
\Sigma(X) = d \Sigma(M') - \frac{d^2 - 1}{3} \Sigma(Y_2) + \frac{(d^2-1)(d^2-4)}{45} \Sigma(Y_4) - \hdots .
\end{equation}

The submanifolds $Y_r$ are just transverse intersections of perturbations of the submanifold $Y$, which we define recursively.  
First, $Y_1 = Y$.  
Now, assuming that $Y_r$ is defined for some positive integer $r$, we can perturb $Y_r$ to obtain a submanifold $Y_r'$ which is transverse to $Y$. 
Define $Y_{r+1} = Y_r' \cap Y$.
The signature of such a manifold is independent of the choice of (transverse) perturbation, and note that this procedure can be accomplished in such a way so that $Y_{r+1} \subset Y_r$ for each $r$.

Notice that $Y_{r+1}$ will have codimension $2$ within $Y_r$.  
Therefore, if $r$ is even, then $Y_r$ will have dimension a multiple of $4$.  
Thus, all signatures in \eqref{eqn:signature formula 2} are potentially non-zero.
Also, for $n$ even, the maximal number of nonzero terms on the right-hand side of \eqref{eqn:signature formula 2} is $1 + (n/2)$.

\begin{remark}\label{rmk:comparison of normal bundles}
Let $\nu$ denote the normal bundle of $Y$ in $X$, and let $\nu'$ denote the normal bundle of $N'$ in $M'$.  
Since $Y \cong N'$ we may consider both of these bundles as being over $N'$, where it follows immediately that $\nu' = \nu^d$. 
Both bundles $\nu'$ and $\nu$ can be extended to complex line bundles over $M'$, which we denote $E'$ and $E$, respectively.  
From our above considerations we have that $E' = E^d$, from which follows that $c_1(E') = d c_1(E)$. 
\end{remark}

\begin{remark}\label{rmk:top Chern class} 
We wish to consider the top Chern class $c_r(Y_r^\perp)$, where $Y_r^\perp$ denotes the normal bundle of $Y_r$ in $X$.
Notice that $Y_r^\perp$ splits as a direct sum of the portion that is tangent to $Y$, denoted $Y_r^\perp \bigr|_Y$, and the part that is orthogonal to $Y$, denoted $Y^{\perp}\bigr|_{Y_r}$.  
We write this as
	\begin{equation*}
	Y_r^\perp =  Y_r^\perp \bigr|_Y \oplus Y^{\perp}\bigr|_{Y_r}.
	\end{equation*}
By the Chern class version of the Whitney sum formula \cite[Equation 14.7]{MilnorStasheff} we have, for the top Chern classes, 
	\begin{equation*}
	c_r(Y_r^\perp) = c_{r-1}(Y_r^\perp \bigr|_Y) \cdot c_1 \left( Y^\perp \bigr|_{Y_r} \right).
	\end{equation*}
Since $Y_r \subset Y$ for each $r$, and since $Y \cong N'$, there is an associated submanifold $N_r'$ of $N'$ with $N_r' \cong Y_r$. 
The bundle $Y_r^\perp \bigr|_Y$ is naturally isomorphic to the normal bundle of $N_r'$ within $N'$, denoted $(N_r')^\perp \bigr|_{N'}$.
Also, in a similar manner as to Remark \ref{rmk:comparison of normal bundles}, we have $d \, c_1(Y^\perp \bigr|_{Y_r}) = c_1 (  (N')^\perp \bigr|_{N'_r} ) $.
Thus
	\begin{equation}\label{eqn:Chern class reduction}
	d \, c_r \left( Y_r^\perp \right) = c_{r-1} \left( (N_r')^\perp \bigr|_{N'} \right) \cdot c_1 \left( (N')^\perp \bigr|_{N_r'} \right) = c_r \left((N_r')^\perp \right).
	\end{equation}
Exact values for the Chern classes $c_r((N_r')^\perp )$ seem difficult to calculate, but note that by \cite[Lemma 13.1]{BelegradekCH} we have $c_1((N')^\perp) \neq 0$.  
Accordingly, one would expect that each of these Chern classes are typically nonzero in appropriate dimensions.
\end{remark}

\begin{remark}
There is a small error in the general formula in \cite[Equation 14]{Hirzebruch-RamifiedCoverings}.
The error is minor, and the author should have caught it sooner.  
But it should be noted that this is fixed by Viro in \cite[Equation 3]{Viro}.
This corrected form was, of course, used to derive equation \eqref{eqn:signature formula 2} above.
\end{remark}

When $n=2$, notice that equation \eqref{eqn:signature formula 2} reduces to
\begin{equation}\label{eqn:reduction to dimension 2}
\Sigma(X) = d \Sigma(M') - \frac{d^2 - 1}{3d} e((N')^\perp)
\end{equation}
where $e((N')^\perp)$ denotes the Euler number of the normal bundle of $N'$ within $M'$.   
Since $n=2$ and $N'$ is a totally geodesic complex hypersurface in $M'$, by \cite[Proposition 2.5]{GoldmanKapovichLeeb} we have that $e((N')^\perp) = (1/2) \chi(N')$.
Thus
\begin{equation}\label{eqn:n=2 case}
\Sigma(X) = d \Sigma(M') - \frac{d^2-1}{6d} \chi(N').
\end{equation}

\subsection{Proofs of the main theorems}

We first consider Theorem \ref{thm:dimension 2}.  
Note that, when $n=2$, the only two Chern numbers are $c_2$ and $c_1^2$.  
For a closed complex hyperbolic manifold, we have
\begin{equation*}
	\frac{c_1^2}{c_2} = \frac{ {3 \choose 1}^2}{{3 \choose 2}} = 3 \quad \Longrightarrow \quad c_1^2 - 3c_2 = 0.
\end{equation*}
So Theorem \ref{thm:dimension 2} shows that the only ratio of Chern numbers of $X$ is not equal to the corresponding ratio for a complex hyperbolic manifold.

\begin{proof}[Proof of Theorem \ref{thm:dimension 2}]
The formula for the first Pontrjagin number $p_1(X)$ of any complex vector bundle in terms of Chern numbers is $p_1(X) = c_1^2(X) - 2c_2(X)$ \cite[Corollary 15.5]{MilnorStasheff}.  
Also, for any $4$-manifold $X$, $\Sigma(X) = (1/3) p_1(X)$ \cite[page 225]{MilnorStasheff}.
So we have
\begin{align}
c_1^2(X) - 3c_2(X) &= p_1(X) - c_2(X) \notag \\
&= 3 \Sigma(X) - \chi(X) \notag \\
&= 3 \Sigma(X) - \left[ d \chi(M') - (d-1) \chi(N') \right]. \label{eqn:first Chern number equation}
\end{align}
where the last equality follows from a basic inclusion-exclusion argument.

By equation \eqref{eqn:n=2 case} above we also have
\begin{equation}\label{eqn:signature branched cover equation}
\Sigma(X) = d \Sigma(M') - \frac{d^2 - 1}{6d} \chi(N')
\end{equation}
Combining equations \eqref{eqn:first Chern number equation}, \eqref{eqn:signature branched cover equation}, and Corollary \ref{cor:signature Euler characteristic} provides
\begin{align}
c_1^2(X) - 3c_2(X) &= 3 \left[ d \Sigma(M') - \frac{d^2 - 1}{6d} \chi(N') \right] - d \chi(M') + (d-1) \chi(N') \notag \\
&= 3 \left[ \frac{d}{3} \chi(M') - \frac{d^2 - 1}{6d} \chi(N') \right] - d \chi(M') + (d-1) \chi(N') \notag \\
&= \frac{(d-1)^2}{2d} \chi(N') \notag \\
&= m \frac{(d-1)^2}{2d} \chi(N). \label{eqn:final calculation when n=2}
\end{align}
By Chern-Gauss-Bonnet we know that $\chi(N) \neq 0$.  
So $c_1^2(X) - 3c_2(X) \neq 0$ for all $d \geq 2$, and the magnitude of this difference increases without bound as $m \to \infty$.
\end{proof}

\begin{remark}
Note that equation \eqref{eqn:final calculation when n=2} proves Corollaries \ref{cor:corollary 1} and \ref{cor:corollary 2} when $n=2$. 
\end{remark}

\begin{proof}[Proof of Theorem \ref{thm:different chern numbers}]
We proceed by contradiction.  
Assume that all ratios of Chern numbers of $X$ are equal to the corresponding ratio of Chern numbers of $\C \P^n$.
By Corollary \ref{cor:signature Euler characteristic} we then have that 
\begin{equation*}
\Sigma(X) = \frac{1}{n+1} \chi(X) = \frac{1}{n+1} \left[ d \chi(M') - (d-1) \chi(N') \right].
\end{equation*}
On the other hand, by equation \eqref{eqn:signature formula 2} and Corollary \ref{cor:signature Euler characteristic} we have
\begin{align*}
\Sigma(X) &= d \Sigma(M') - \frac{d^2 - 1}{3} \Sigma(Y_2) + \frac{(d^2-1)(d^2-4)}{45} \Sigma(Y_4) - \hdots  \\
&= \frac{d}{n+1} \chi(M') - \frac{d^2 - 1}{3} \Sigma(Y_2) + \frac{(d^2-1)(d^2-4)}{45} \Sigma(Y_4) - \hdots .
\end{align*}
Setting these equations for $\Sigma(X)$ equal yields
\begin{equation}\label{eqn:equal 0 equation 1}
0 = \frac{d-1}{n+1} \chi(N') - \frac{d^2 - 1}{3} \Sigma(Y_2) + \frac{(d^2-1)(d^2-4)}{45} \Sigma(Y_4) - \hdots .
\end{equation}

Now, consider the submanifold $Y_{2r}$.  
One could have alternatively defined this submanifold to be $Y_r \cap Y_r'$, where $Y_r'$ is a copy of $Y_r$ that was perturbed to be transverse to $Y_r$.
The signatures of these two definitions agree.  
But, by using this definition for $Y_{2r}$, one sees by \cite[Corollary 6.3]{Fulton} that $\Sigma(Y_{2r})$ is equal to the top Chern number of the normal bundle to $Y_r$.  
Equation \eqref{eqn:equal 0 equation 1} then becomes
\begin{equation*}
0 = \frac{d-1}{n+1} \chi(N') - \frac{d^2 - 1}{3} c_1(Y_1^\perp) + \frac{(d^2-1)(d^2-4)}{45} c_2(Y_2^\perp) - \hdots .
\end{equation*}
By equation \eqref{eqn:Chern class reduction}, this can be rewritten further as
\begin{align*}
0 &= \frac{d-1}{n+1} \chi(N') - \frac{d^2 - 1}{3d} c_1((N_1')^\perp) + \frac{(d^2-1)(d^2-4)}{45d} c_2((N_2')^\perp) - \hdots .
\end{align*}
Because there are at most $1+(n/2)$ nonzero summands on the right-hand side of this equation, and since the Euler characteristic and all Chern numbers are independent of $d$, it is clear that the above equation is only true for at most finitely many values of $d$.
To help prove the corollaries just note that one can rewrite this expression further as
\begin{align}
0 &= m \frac{d-1}{n+1} \chi(N) - \frac{d^2 - 1}{3d} f_1(m) c_1(N_1^\perp) \label{eqn:proof of Cor 1}  \\
& + \frac{(d^2-1)(d^2-4)}{45d} f_2(m) c_2(N_2^\perp) - \hdots \notag
\end{align}
for some functions $f_i$ of $m$, and where $N_r$ is defined analogously to $N_r'$.  

\end{proof}

\begin{remark}
Note that, if one could compute explicit values for the Chern classes in \eqref{eqn:proof of Cor 1}, then one could likely prove that this expression is never $0$ for all $d \geq 2$.
\end{remark}

\begin{proof}[Proof of Corollary \ref{cor:corollary 1}]
By \cite{MinemyerKahler} one can construct a sequence of finite covers $(M_k, N_k)$ of $(M,N)$, with $d$-fold cyclic branched covers $X_k$, which admit a Riemannian metric $g_k$ whose sectional curvatures all lie in the interval $[-1-(1/k), (-1/4) + (1/k)]$.
Let $m_k$ denote the degree of each finite cover.  
Note that the right-hand side of \eqref{eqn:proof of Cor 1}, with $m_k$ substituted for $m$, approaches $\pm \infty$ as $k \to \infty$.  
If the ratio of all Chern numbers of $X_k$ approached those of $\C \P^n$ as $k \to \infty$, then the right-hand side of \eqref{eqn:proof of Cor 1} would approach $0$ as $k \to \infty$.  
\end{proof}

Corollary \ref{cor:corollary 2} follows immediately from Corollary \ref{cor:corollary 1}.

%\subsection*{Conflict of Interest Statement}  The author states that there is no conflict of interest.

%\subsection*{Data Availability Statement}  This paper has no associated data.

\vskip 20pt

%\begin{thebibliography}{XXXXX}

\bibliographystyle{plain}
\bibliography{references.bib} 

%\end{thebibliography}

\end{document}